%% file: aper-friese06.tex
\documentclass[12pt,a4paper,reqno]{amsart}
\usepackage[english]{babel}
\usepackage[utf8]{inputenc}
\usepackage{amsmath,amsfonts,amssymb,amsthm}
\usepackage{a4wide}
\usepackage{hyperref}
\usepackage{graphicx}
\usepackage{dsfont}
\usepackage{color}
\usepackage{enumitem}
\usepackage{tikz}
\usetikzlibrary{calc,angles,positioning,quotes,decorations,graphs}
\usetikzlibrary{shapes.geometric}
\usepackage{tkz-euclide}

\newtheorem{thm}{Theorem}[section]

\newtheorem{cor}[thm]{Corollary}

\newtheorem*{thm*}{Theorem}
\theoremstyle{definition}

\theoremstyle{remark}

\newcommand{\Z}{\mathbb{Z}}

\renewcommand{\phi}{\varphi}
\renewcommand{\rho}{\varrho}
\renewcommand{\epsilon}{\varepsilon}

\begin{document}
\title{Frieze patterns and aperiodic tilings of the plane}

\author{Dirk Frettl\"oh}
\address{Technische Fakult\"at, Bielefeld University\newline\hspace*{\parindent}Postfach 100131, 33501 Bielefeld, Germany}
\email{dfrettloeh@techfak.de}

\author{Jan Maz\'{a}\v{c}}
\address{Department f\"{u}r Mathematik, Friedrich-Alexander-Universit\"{a}t Erlangen-N\"{u}rnberg, \newline 
Cauerstra{\ss}e 11, 91058 Erlangen, Germany}
\email{mazacj@math.fau.de}

\date{\today}

\begin{abstract}
This short note provides two examples of aperiodic frieze patterns of the plane, supported on the rhombic Penrose tiling and the Godr\`eche--Lan\c{c}on--Billard tiling. That is, we provide a decoration of their vertices with positive integers which satisfy the diamond rule, in analogy to the usual (in)finite frieze patterns as defined by Conway and Coxeter. 
\end{abstract}

\maketitle

\section{Frieze patterns in a nutshell}
Frieze patterns were introduced by Conway and Coxeter in \cite{Co71,CC73a,CC73b}. A  \emph{Conway--Coxeter frieze pattern} (or frieze pattern for short) is a grid of numbers consisting of a finite number of bi-infinite rows as shown below. Any frieze begins with a row of zeros followed by a~row of ones. The first non-trivial row is called the \emph{quiddity row}. The second-to-last row consists again of ones, and the last row is a row of zeros.  So in general, a Conway--Coxeter frieze of width $m$ is of the form 
\begin{small}
    \begin{center}
        \begin{tabular}{ccccccccccccccc}
        $\cdots$ & 0 && 0 && 0 && 0 && 0 && 0 & $\cdots$ \\[3pt]
        $\cdots$ && 1 && 1 && 1 && 1 && 1 &&  $\cdots$ \\[3pt]
        $\cdots$ & $a^{}_{-1,1}$ && $a^{}_{0,2}$ && $a^{}_{1,3}$ && $a^{}_{2,4}$ && $a^{}_{3,5}$ && $a^{}_{4,6}$ & $\cdots$ \\[3pt]
        $\cdots$ && $a^{}_{-1,2}$ && $a^{}_{0,3}$ && $a^{}_{1,4}$ && $a^{}_{2,5}$ && $a^{}_{3,6}$ &&   $\cdots$ \\[3pt]
         & $\ddots$ && $\ddots$ && $\ddots$ && $\ddots$ && $\ddots$ &&  $\ddots$ & \\[3pt]
        $\cdots$ && $a^{}_{-2,m-1}$ && $a^{}_{-1,m}$ && $a^{}_{0,m+1}$ && $a^{}_{1,m+2}$ && $a^{}_{2,m+3}$ &&   $\cdots$ \\[3pt]
        $\cdots$ & 1 && 1 && 1 && 1 && 1 && 1 & $\cdots$ \\[3pt]
        $\cdots$ && 0 && 0 && 0 && 0 && 0 &&  $\cdots$ 
        \end{tabular}
    \end{center}
    \end{small}
and all its non-trivial entries are positive integers which satisfy the \emph{diamond rule}. This rule requires that for every diamond of the form 
\begin{center}
    \begin{tabular}{ccc}
          & $a$ & \\
          $b$ & & $c$ \\
          & $d$ &
    \end{tabular}
\end{center}      
holds that $bc-ad=1$. 

Such friezes are always periodic with period $m-3$ (meaning that every row is a periodic sequence), and the quiddity row has a surprising geometric interpretation in terms of triangulations of polygons, which is nicely summarised in the famous Conway--Coxeter theorem. 
\begin{thm*}[{\cite{CC73a,CC73b}}]
There is a bijection between triangulated polygons with $m$ vertices and Conway--Coxeter frieze patterns of width $m-3$. In particular, %
\begin{itemize}
    \item[(i)] Let $F$ be a Conway--Coxeter frieze of width $m-3$ with entries $a^{}_{i,j}$ as above. Then, the sequence $(a^{}_{0,2}, a^{}_{1,3}, \, \dots, \, , \, a^{}_{m-1,m+1})$ gives a triangulation of an $m$-gon with vertices $1,\, 2,\, \dots \, , \, m$ such that $a^{}_{i-1,i+1}$ denotes the number of triangles sharing the vertex $i$.
    \item[(ii)] Conversely, given a triangulated $m$-gon, one defines $a^{}_{i}$ to be the number of triangles touching the vertex $i$ and $(a^{}_{n})^{}_{n\in\Z}$ to be the periodic continuation (in both directions) of  $a^{}_1, \, a^{}_{2}, \, \dots\, , \, a^{}_{m}$. Then, $(a^{}_{n})^{}_{n\in\Z}$ is a quiddity row of a Conway--Coxeter frieze, which is defined by it and by the diamond rule.
\end{itemize}
\end{thm*}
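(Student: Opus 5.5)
The plan is to pass through the matrix description of friezes. I will associate to an integer sequence $(a_n)$ the matrices
\[
M(a)=\begin{pmatrix} a & -1\\ 1 & 0\end{pmatrix}.
\]
The entries of a frieze are then determined by the quiddity row through the recurrence along diagonals,
\[
a_{i,j+1}=a_{j,j+2}\,a_{i,j}-a_{i,j-1},
\]
with initial values $a_{i,i}=0$ and $a_{i,i+1}=1$. This recurrence follows from the diamond rule by a routine induction. In fact, the diamond rule forces every diagonal $(a_{i,j})_j$ to satisfy this linear three-term recurrence, with coefficients given by the quiddity row. The reason is that all $2\times2$ "frames" share determinant $1$.

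Consequently the frieze closes up with a row of ones followed by a row of zeros at the correct height exactly when
\[
M(a_1)M(a_2)\cdots M(a_m)=-\mathrm{Id}.
\]
Moreover, the entries $a_{i,j}$ are the corresponding continuants. Periodicity then falls out via the glide symmetry $a_{i,j}=a_{j,i+m}$, which one reads off the continuant description.

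For direction (ii), I will induct on $m$. For $m=3$ the sequence is $(1,1,1)$ and the check is direct. For $m>3$, every triangulation has an ear, i.e. a vertex $i$ with $a_i=1$. Cutting it off gives a triangulated $(m-1)$-gon whose quiddity is obtained by replacing $(a_{i-1},1,a_{i+1})$ with $(a_{i-1}-1,\,a_{i+1}-1)$. The identity
\[
M(a+1)M(1)M(b+1)=M(a)M(b)
\]
is an easy check, so the product condition is preserved under this operation and under its inverse. Hence the product equals $-\mathrm{Id}$ for every triangulation. Positivity of all interior entries also goes by induction: gluing an ear corresponds to inserting a diagonal into the frieze, and the new entries are sums of old positive entries. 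Concretely, the new entries are $a_{i,j}$ plus neighbouring entries, from the continuant recursion.

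For direction (i) and bijectivity, the key claim is that any positive-integer frieze of width $m-3\ge1$ has some quiddity entry equal to $1$. This is the step I expect to be the main obstacle. To show it, I will argue that there is a position where the second row is maximal relative to its neighbours in a suitable sense. More precisely, take $a_{i-1,i+1}$, and use the diamond identity
\[
a_{i-1,i+1}\,a_{i,i+2}=a_{i-1,i+2}+1.
\]
Suppose all quiddity entries are at least $2$. Then the diagonals grow strictly: $a_{i,j+1}>a_{i,j}$, as one shows inductively from the recurrence. In that case they can never return to $1$, which contradicts the bounded width.

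Once a $1$ exists, it can be removed by the inverse ear operation. This produces a positive frieze of width one less, and the ear can be re-attached to the triangulation produced by induction, so that $a_{i-1,i+1}$ counts triangles at vertex $i$. Injectivity holds because the frieze is determined by its quiddity row and the triangulation is determined by its triangle counts; the latter follows by the same ear induction. Together these give the bijection.
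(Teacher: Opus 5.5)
The paper gives no proof of this theorem; it only cites Conway--Coxeter. Your outline is the classical argument in $2\times 2$ matrix form: a $1$ must occur in the quiddity row, and cutting or gluing an ear corresponds to $(a+1,1,b+1)\leftrightarrow(a,b)$. It goes through, subject to one correction and two small justifications.

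\textbf{The correction.} Your recurrence is off by one. In the paper's indexing the quiddity entry at vertex $j$ is $a_{j-1,j+1}$, so the relation is
\[
a_{i,j+1}=a_{j-1,j+1}\,a_{i,j}-a_{i,j-1}.
\]
As you wrote it, taking $j=i+1$ gives $a_{i,i+2}=a_{i+1,i+3}$, which would force a constant quiddity row. To derive the correct version, subtract the two diamond relations through $a_{i,j}$. This shows $(a_{i,j+1}+a_{i,j-1})/a_{i,j}$ is independent of $i$, provided the entries involved are nonzero, which holds inside a positive frieze. Evaluating at $i=j-1$ identifies the coefficient.

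Your existence-of-a-$1$ argument only uses that all coefficients are $\geq 2$, so it survives unchanged. The differences $a_{i,j+1}-a_{i,j}$ are then nondecreasing and start at $1$. Hence $a_{i,i+m-1}\geq m-1>1$, contradicting the row of ones. The sentence about a ``locally maximal'' position is a detour you can drop.

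\textbf{The justifications.}
\begin{enumerate}
\item In direction (i) you assert that removing the $1$ yields a \emph{positive} frieze; this needs a reason.
\begin{itemize}
\item The diamond identity you wrote down but did not use settles the new quiddity entries. It gives $a_{i-1,i+1}a_{i,i+2}=a_{i-1,i+2}+1\geq 2$, since $a_{i-1,i+2}$ is either a positive interior entry or, in width $1$, lies in the row of ones. So two adjacent $1$'s never occur, and $a_{i\pm1}-1\geq 1$.
\item More generally, the continuant identities you invoke in (ii) show that deleting the ear just deletes the entries indexed by the removed vertex. Every other entry is unchanged, so positivity of the whole reduced frieze is inherited.
\end{itemize}
\item For injectivity, note that a vertex lying in exactly one triangle is necessarily an ear, with triangle $(i-1,i,i+1)$. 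So two triangulations with the same counts share that ear, and the induction applies.
\end{enumerate}
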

It is perhaps not too surprising that the other rows of a Conway--Coxeter frieze also have a combinatorial interpretation, see for instance \cite{BCI74}. 

One can relax the definition of the Conway--Coxeter frieze and allow for only one row of zeros and let the width be infinite. In such a case, one speaks about an \emph{infinite frieze}. These were studied in connection with triangulations of punctured discs in \cite{Tsch15}, and later studied and characterised in \cite{BPT16}. One can show that \emph{every} quiddity row whose entries are all greater than one gives rise to a valid infinite frieze (meaning that the diamond rule always has some positive integer solution). For any such frieze, there is also a geometric interpretation in terms of triangulations of various surfaces. The closest case to the Conway--Coxeter frieze is the case when the quiddity row is periodic. Denote by $A^{}_{n,m}$ the annulus with $n$ marked points on its outer boundary and with $m$ marked points on its inner boundary. Then, the following theorem holds. 
\begin{thm*}[{\cite[Thms.~4.6~and~3.7\,]{BPT16}}]
Let $F$ be a periodic frieze with periodic quiddity row $a^{}_1, \, a^{}_{2}, \, \dots\, , \, a^{}_{n}$. Then either $F$ is finite or $a^{}_1, \, a^{}_{2}, \, \dots\, , \, a^{}_{m}$ defines a triangulation of $A^{}_{n,m}$ for some $m\geqslant 0$. Conversely, every triangulation of $A^{}_{n,m}$ defines a periodic infinite frieze. 
\end{thm*}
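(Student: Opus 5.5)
The plan is to reduce everything to the linear recurrence underlying the diamond rule. Index the entries $m^{}_{i,j}$ of $F$ as in the display for Conway--Coxeter friezes, with $m^{}_{i,i}=0$, $m^{}_{i,i+1}=1$ and $m^{}_{i-1,i+1}=a^{}_i$. Then the diamond rule is equivalent to the three-term recurrence
\[
m^{}_{i,j+1}=a^{}_j\,m^{}_{i,j}-m^{}_{i,j-1}.
\]
The first consequence is uniqueness: a frieze is determined by its quiddity row. The second is that the behaviour of each diagonal is governed by the monodromy matrix
\[
M=\prod_{j=1}^{n}\begin{pmatrix} a^{}_j & -1\\ 1 & 0\end{pmatrix}\in \mathrm{SL}_2(\Z).
\]
Positivity of all entries forces the diagonals to be non-oscillating, so I expect $\operatorname{tr} M\geqslant 2$. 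The boundary case in which a row of zeros reappears is the case $M=-\mathrm{Id}$, and that is exactly when $F$ is a finite (Conway--Coxeter) frieze. This splits the statement into the finite case, handled by the Conway--Coxeter theorem, and the genuinely infinite case.

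For the forward direction in the infinite case I would induct on $\sum_i a^{}_i$ by ear-cutting.
\begin{itemize}
\item[(1)] Suppose some quiddity entry equals $1$, say $a^{}_i=1$. Delete it and replace $a^{}_{i\pm1}$ by $a^{}_{i\pm1}-1$ (periodically in every period). Check directly from the recurrence that the result is again the quiddity row of a positive integral periodic frieze, of period $n-1$: the new diagonals are obtained from the old ones by dropping one term. Geometrically this undoes gluing a triangle onto a boundary edge of the outer boundary, i.e.\ removing a peripheral arc.
\item[(2)] The process terminates. If it ends in a finite frieze we are in the first alternative. Otherwise we reach a periodic quiddity with all $a^{}_i\geqslant 2$.
\item[(3)] For such a quiddity, construct by hand a triangulation of $A^{}_{n,m}$ all of whose arcs are bridging, with $m=\sum_i (a^{}_i-2)$. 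Outer vertex $i$ is joined to $a^{}_i-1$ consecutive inner vertices, consecutive outer vertices share one inner vertex, and the count of triangles at outer vertex $i$ is then $a^{}_i$. Since $\sum_i(a^{}_i-1)=n+m$ matches the number of triangles, the fans close up around the annulus.
\item[(4)] Re-glue the ears removed in (1) to recover a triangulation of some $A^{}_{n,m}$ realising the original quiddity. Uniqueness of the frieze given its quiddity then finishes this direction.
\end{itemize}

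For the converse I would pass to the universal cover of $A^{}_{n,m}$, an infinite strip triangulated periodically. Define $m^{}_{i,j}$, for outer lifts $i<j$, as the number of Broline--Crowe--Isaacs style matchings of outer vertices $i+1,\dots,j-1$ with triangles lying below the lifted arc $[i,j]$. Equivalently, take $m^{}_{i,j}$ to be the $\lambda$-length of that arc when all arcs of the triangulation have length $1$. With this definition:
\begin{itemize}
\item integrality and positivity are immediate from the combinatorial description;
\item the boundary values $m^{}_{i,i+1}=1$ and $m^{}_{i-1,i+1}=a^{}_i$ are immediate;
\item the diamond rule is the Ptolemy relation for the quadrilateral with vertices $i,i+1,j,j+1$;
\item periodicity is inherited from the deck transformation;
\item infinitude follows because arcs in the cover never close up, so no second row of zeros appears.
\end{itemize}

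I expect the main obstacle to be step (1) of the forward direction. The difficulty is showing that deleting an entry $1$ preserves positivity of every entry of the frieze, not just of the quiddity, and that when no $1$ is present, positivity really forces $\operatorname{tr}M\geqslant 2$ rather than allowing some degenerate elliptic monodromy. I would first try to prove a lemma saying that in any positive integral frieze there is, in every period, some entry $m^{}_{i,j}=1$ with $j-i\geqslant 2$. Such an entry plays the role of a peripheral arc, i.e.\ a diagonal of an ear, and it gives the reduction directly. The Ptolemy/recurrence identities then let me cut the frieze along that entry into two smaller friezes and apply induction.
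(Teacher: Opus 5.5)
The paper gives no proof of this statement; it is quoted from \cite{BPT16}, so there is no internal argument to compare with. Judged on its own, your main line is a workable strategy: ear reduction, a bridging fan once all $a_i\geqslant 2$, re-gluing, and Ptolemy relations on the universal cover for the converse. Two things need repair. Your closing paragraph aims at a non-problem using a false lemma, and step (3) fails in one case.

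\emph{Step (1) and the proposed lemma.} Step (1) needs nothing beyond what you already wrote. With $A(t)=\bigl(\begin{smallmatrix} t&-1\\ 1&0\end{smallmatrix}\bigr)$ one has $A(x)A(1)A(y)=A(x-1)A(y-1)$. This gives $m'_{k,l}=m_{k,l}$ for all $k,l\notin i+n\Z$, so every entry of the reduced frieze, including the new quiddity entries, is an entry of the old one. Positivity, integrality and the absence of a second row of zeros are therefore inherited, and an infinite frieze never reduces to a finite one.

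The lemma you propose instead (an entry $1$ with $j-i\geqslant 2$ in every period) is false for infinite friezes. If all $a_j\geqslant 2$, then $m_{i,j+1}-m_{i,j}=(a_j-1)m_{i,j}-m_{i,j-1}\geqslant m_{i,j}-m_{i,j-1}\geqslant 1$, so the diagonals increase strictly; the constant quiddity $2$ has rows $2,3,4,\dots$. The same inequality disposes of your worry about elliptic monodromy. In fact the trace discussion can be dropped entirely, since finite versus infinite is part of the definition. It is also slightly off as stated: $M=-\mathrm{Id}$ characterises finite friezes only when $n$ is the full number of polygon vertices, and the square with period $(1,2)$ has $\operatorname{tr}M=0$.

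\emph{Step (3) when $m=0$.} If every reduced entry equals $2$, then $m=\sum_i(a_i-2)=0$ and there is no inner vertex to fan to. This case, the arithmetic frieze, cannot be realised on an annulus with $m\geqslant 1$ inner marked points. With no entry $1$ there are no outer peripheral arcs, so each outer vertex carries exactly one arc. All lifted outer vertices would then be joined to a single lifted inner vertex, which the deck transformation would have to fix. The case has to be read as the once-punctured disc with every outer vertex joined to the puncture. For the converse, the universal cover is then a fan around a single lifted puncture rather than a strip.

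\emph{The converse.} The Broline--Crowe--Isaacs matchings must use all lifted triangles incident to $i+1,\dots,j-1$. Many of these are crossed by $[i,j]$ rather than lying below it. Also, positivity of a matching count is not immediate from the definition. The $\lambda$-length version avoids both issues:
\begin{itemize}
\item Ptolemy gives the diamond rule.
\item Ptolemy inside the fan at $j$ gives $\lambda(j-1,j+1)=a_j$.
\item By uniqueness, the $\lambda$-length array coincides with the continuant array. The former gives positivity and the latter gives integrality.
\end{itemize}
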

Even for the non-periodic quiddity sequences there exists a geometric interpretation of the corresponding infinite frieze in terms of triangulations of an infinite strip and vice versa \cite[Thms.\ 5.2 and 5.6\,]{BPT16}. Moreover, this geometric approach allows for understanding the other entries of the frieze in terms of geometric objects in both the periodic and the non-periodic case. 

Moreover, infinite friezes were studied intensiively in context of category theory and
representation theory, see for instance \cite{BM12}, or \cite{BFGST18} and references therein.
An example of a recent result connects representation theory of triangulated $\infty$-gons 
(the so-called fountain triangulations), tilting subcategories of the Frobenius category 
$\mathcal{C}^{}_2$ and, surprisingly, all Penrose tilings parametrised by sequences of zeros 
and ones, see Theorem A in \cite{EF25} for more details. 

In all the results mentioned above, the frieze patterns are always supported on a square grid; or, more
generally, a rhombic grid, see Figure \ref{fig:per_vs_aper} left. 
Considered as a tiling, one may view this grid as a periodic rhomb tiling of the plane. Of course, a finite frieze pattern 
is only a tiling of some strip in such a rhombic tiling. But if we allow the labels to be 
arbitrary integers, rather than positive integers, we can extend the frieze pattern throughout the entire 
tiling, still obeying the diamond rule: on top of the top row of zeroes, the diamond rule requires a row 
of $-1$s. On top of this row there is some freedom. So we may add a row $\ldots, -a_{0,2}, -a_{1,3}, 
-a_{2,4} , \ldots$ on top of the $-1$s. From there on, the diamond rule forces a copy of the original
frieze, but with all values negative, up to a row of zeroes again. From there on we may stack
the frieze in a periodic manner, alternating between positive values and negative values. 
In the same way, the frieze continues on the bottom of the finite frieze. 

Because of this observation, we decided to study whether one can decorate the vertices of other rhomb 
tilings in the plane with integers, or even positive integers, such that the values satisfy the diamond 
rule along every tile. We went for \emph{aperiodic} tilings of the plane (with rhombic tiles).
In this way, an entirely new class of frieze patterns is defined whose underlying geometry is no longer 
periodic, but aperiodic (in the strong sense), as illustrated in Figure~\ref{fig:per_vs_aper}, right.
\begin{figure}
    \centering
    \includegraphics[width=0.95\linewidth]{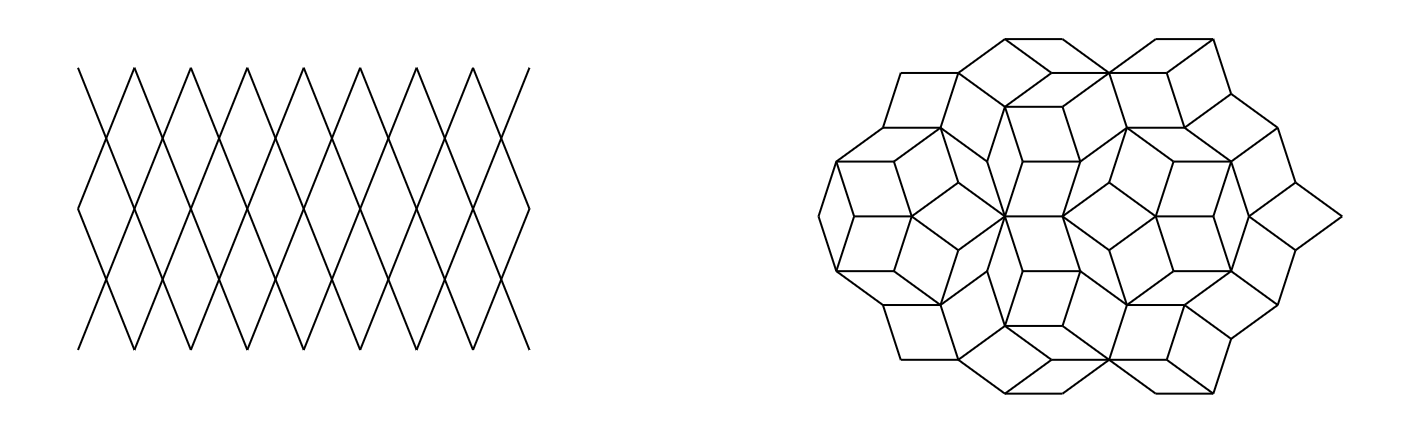}
    \caption{Periodic pattern on the left and a patch of Penrose rhombic tiling.}
    \label{fig:per_vs_aper}
\end{figure}
In this short work, we show that it is possible to define a frieze pattern in such a way on two examples of aperiodic tilings with rhombuses as tiles, namely, the famous Penrose rhombic tiling and the Godr\`eche--Lan\c{c}on--Billard tiling. It is a priori not clear at all if the underlying geometry even allows for the existence of a frieze pattern on a finite patch, and if so, whether such a pattern can be extended to the entire plane. We show two different approaches to prove that the two mentioned tilings admit an infinite family of frieze patterns on the entire tiling. 

\section{Penrose tiling}
The Penrose rhombic tiling is an aperiodic tiling of a plane by two rhombuses, thick and thin (and their rotated versions). The tiling was constructed / discovered by Sir Roger Penrose in 1974 \cite{Pen74}, and still serves as a prototype of an aperiodic tiling. A finite patch of this tiling is depicted in Figure \ref{fig:per_vs_aper} (right). In fact, there is not one single Penrose tiling, but uncountably many of them, but all are locally indistinguishable --- that is, each local patch of one of them occurs in every Penrose tiling.

There are many ways to construct Penrose tilings (matching rules, de Bruijn's pentagrid method, substitution rule), see for example \cite{TAO} for a survey. We will briefly recall a construction that gives us the desired frieze pattern on the vertices of the tiling (almost) for free. 

From the geometric viewpoint, one of the most natural ways to construct a Penrose rhombic tiling is to use the dualisation method for the root lattice $A^{}_{4}$ and the corresponding dual Voronoi cells. This lattice encapsulates the desired fivefold symmetry, and a projection to a suitable two-dimensional plane preserves the pentagonal symmetry. The construction in its full generality and with all details can be found in \cite{BKSZ90}. For a shorter summary including all the necessary steps, see \cite{Maz23}. 

Using this construction, the tiles of the Penrose tiling are projections of facets of regular polyhedra (forming a layer in four-dimensional space). All vertices of these polyhedra have their coordinates in the dual lattice $A^{\ast}_4$, more precisely: in $A^{\ast}_4 \backslash A^{}_{4}$ (the centres of those polyhedra are at lattice points of $A^{}_{4}$). The lattice $A^{\ast}_4$ is generated by 4 vectors $\mathbf{a}^{}_1,\mathbf{a}^{}_2,\mathbf{a}^{}_3,\mathbf{a}^{}_4$ whose projection into the plane where the tiling lives is shown in Figure~\ref{fig:basis_projections}. 
\begin{figure}[h]
\centering
\begin{tikzpicture}[yscale=0.8,xscale=0.8]
	\filldraw[black] (0,0) circle (2pt) node[above right]{$\ \scriptstyle 0$}; 
	\node[regular polygon, regular polygon sides=5, shape border rotate = -18,inner sep=1cm] (s) at (0,0) {} ;
	\draw[-stealth] (s.center) -- (s.corner 5) node [above]{$\scriptstyle \pi(\mathbf{a}^{}_1)$};
	\draw[-stealth] (s.center) -- (s.corner 1) node [right]{ $\scriptstyle \pi(\mathbf{a}^{}_2)$};
	\draw[-stealth] (s.center) -- (s.corner 2) node [above right]{$\scriptstyle \pi(\mathbf{a}^{}_3)$};
	\draw[-stealth] (s.center) -- (s.corner 3) node [below right]{$\scriptstyle \pi(\mathbf{a}^{}_4)$};
\end{tikzpicture}
\caption{Projections of the generators $\mathbf{a}^{}_i$ into the tiling plane.  }
\label{fig:basis_projections}
\end{figure}

It follows that every vertex $\mathbf{v}$ of a~Penrose tiling can be obtained as a projection of a point $\mathbf{V} \in A^{\ast}_4\backslash A^{}_4$ and 
\[\mathbf{V} \, = \, \sum_{i=1}^4 n^{}_i \mathbf{a}^{}_i, \qquad n^{}_{i}\in \Z\, , \]
thus giving $\mathbf{v} \, = \, \sum_{i=1}^4 n^{}_i \pi(\mathbf{a}^{}_i)$. Since $\mathbf{V} \in A^{\ast}_4\backslash A^{}_4$, we always have $\sum_i n^{}_{i} \not\equiv 0 \mod 5$, see \cite[page~5]{Maz23}. We can then assign to each vertex $\mathbf{v}=\sum_{i=1}^4 n^{}_i \pi(\mathbf{a}^{}_i)$ the evaluation $r(\mathbf{v}) = \sum_i n^{}_{i} \mod 5$. We note that the function $r$ corresponds to the index function in the de Bruijn construction of Penrose tilings \cite{dBr81}.

The function effectively distinguishes different types of points; in particular (using the terminology from \cite{CS99}), it detects the shallow holes (if $r(\mathbf{v}) \equiv \pm 1\mod 5$) and deep holes (if $r(\mathbf{v}) \equiv \pm 2\mod 5$). One can easily check that every tile has exactly one vertex being a shallow hole. This vertex naturally determines the orientation of the tile and allows us to define the diamond rule as Figure~\ref{fig:tiles_deco} indicates. 
\begin{figure}[h!]
\centering
\tikzset{every picture/.style={line width=0.75pt}}
\begin{tikzpicture}[x=0.75pt,y=0.75pt,yscale=-1,xscale=1]
\draw  [color={rgb, 255:red, 0; green, 0; blue, 0 }  ,draw opacity=1 ][fill={rgb, 255:red, 155; green, 155; blue, 155 }  ,fill opacity=0.46 ] (441,112.68) -- (427.21,155.06) -- (413.46,112.67) -- (427.25,70.29) -- cycle ;

\draw  [color={rgb, 255:red, 0; green, 0; blue, 0 }  ,draw opacity=1 ][fill={rgb, 255:red, 155; green, 155; blue, 155 }  ,fill opacity=0.46 ] (139.04,112.67) -- (102.97,138.84) -- (66.93,112.63) -- (103,86.45) -- cycle ;

\draw  [fill={rgb, 255:red, 0; green, 0; blue, 0 }  ,fill opacity=1 ] (134.19,112.67) .. controls (134.19,109.99) and (136.36,107.82) .. (139.04,107.82) .. controls (141.72,107.82) and (143.89,109.99) .. (143.89,112.67) .. controls (143.89,115.34) and (141.72,117.52) .. (139.04,117.52) .. controls (136.36,117.52) and (134.19,115.34) .. (134.19,112.67) -- cycle ;

\draw  [fill={rgb, 255:red, 0; green, 0; blue, 0 }  ,fill opacity=1 ] (408.61,112.67) .. controls (408.61,109.99) and (410.78,107.82) .. (413.46,107.82) .. controls (416.14,107.82) and (418.31,109.99) .. (418.31,112.67) .. controls (418.31,115.34) and (416.14,117.52) .. (413.46,117.52) .. controls (410.78,117.52) and (408.61,115.34) .. (408.61,112.67) -- cycle ;

\draw  [color={rgb, 255:red, 0; green, 0; blue, 0 }  ,draw opacity=1 ][dash pattern={on 0.84pt off 2.51pt}] (517,112.68) -- (503.21,155.06) -- (489.46,112.67) -- (503.25,70.29) -- cycle ;

\draw  [color={rgb, 255:red, 0; green, 0; blue, 0 }  ,draw opacity=1 ][dash pattern={on 0.84pt off 2.51pt}] (255.04,112.67) -- (218.97,138.84) -- (182.93,112.63) -- (219,86.45) -- cycle ;

\draw  [fill={rgb, 255:red, 255; green, 255; blue, 255 }  ,fill opacity=1 ] (244.63,112.67) .. controls (244.63,106.92) and (249.29,102.26) .. (255.04,102.26) .. controls (260.79,102.26) and (265.44,106.92) .. (265.44,112.67) .. controls (265.44,118.41) and (260.79,123.07) .. (255.04,123.07) .. controls (249.29,123.07) and (244.63,118.41) .. (244.63,112.67) -- cycle ;

\draw  [fill={rgb, 255:red, 255; green, 255; blue, 255 }  ,fill opacity=1 ] (208.59,86.45) .. controls (208.59,80.7) and (213.25,76.05) .. (219,76.05) .. controls (224.74,76.05) and (229.4,80.7) .. (229.4,86.45) .. controls (229.4,92.2) and (224.74,96.86) .. (219,96.86) .. controls (213.25,96.86) and (208.59,92.2) .. (208.59,86.45) -- cycle ;

\draw  [fill={rgb, 255:red, 255; green, 255; blue, 255 }  ,fill opacity=1 ] (172.52,112.63) .. controls (172.52,106.88) and (177.18,102.22) .. (182.93,102.22) .. controls (188.68,102.22) and (193.33,106.88) .. (193.33,112.63) .. controls (193.33,118.37) and (188.68,123.03) .. (182.93,123.03) .. controls (177.18,123.03) and (172.52,118.37) .. (172.52,112.63) -- cycle ;

\draw  [fill={rgb, 255:red, 255; green, 255; blue, 255 }  ,fill opacity=1 ] (208.56,138.84) .. controls (208.56,133.1) and (213.22,128.44) .. (218.97,128.44) .. controls (224.72,128.44) and (229.38,133.1) .. (229.38,138.84) .. controls (229.38,144.59) and (224.72,149.25) .. (218.97,149.25) .. controls (213.22,149.25) and (208.56,144.59) .. (208.56,138.84) -- cycle ;

\draw  [fill={rgb, 255:red, 255; green, 255; blue, 255 }  ,fill opacity=1 ] (506.59,112.68) .. controls (506.59,106.93) and (511.25,102.27) .. (517,102.27) .. controls (522.75,102.27) and (527.41,106.93) .. (527.41,112.68) .. controls (527.41,118.43) and (522.75,123.09) .. (517,123.09) .. controls (511.25,123.09) and (506.59,118.43) .. (506.59,112.68) -- cycle ;

\draw  [fill={rgb, 255:red, 255; green, 255; blue, 255 }  ,fill opacity=1 ] (492.84,70.29) .. controls (492.84,64.54) and (497.5,59.88) .. (503.25,59.88) .. controls (509,59.88) and (513.66,64.54) .. (513.66,70.29) .. controls (513.66,76.03) and (509,80.69) .. (503.25,80.69) .. controls (497.5,80.69) and (492.84,76.03) .. (492.84,70.29) -- cycle ;

\draw  [fill={rgb, 255:red, 255; green, 255; blue, 255 }  ,fill opacity=1 ] (479.05,112.67) .. controls (479.05,106.92) and (483.71,102.26) .. (489.46,102.26) .. controls (495.2,102.26) and (499.86,106.92) .. (499.86,112.67) .. controls (499.86,118.41) and (495.2,123.07) .. (489.46,123.07) .. controls (483.71,123.07) and (479.05,118.41) .. (479.05,112.67) -- cycle ;

\draw  [fill={rgb, 255:red, 255; green, 255; blue, 255 }  ,fill opacity=1 ] (492.8,155.06) .. controls (492.8,149.31) and (497.46,144.65) .. (503.21,144.65) .. controls (508.95,144.65) and (513.61,149.31) .. (513.61,155.06) .. controls (513.61,160.8) and (508.95,165.46) .. (503.21,165.46) .. controls (497.46,165.46) and (492.8,160.8) .. (492.8,155.06) -- cycle ;

\draw (214,133.4) node [anchor=north west][inner sep=0.75pt]    {$c$};
\draw (214,77.4) node [anchor=north west][inner sep=0.75pt]    {$b$};
\draw (178,107) node [anchor=north west][inner sep=0.75pt]    {$a$};
\draw (249,103.4) node [anchor=north west][inner sep=0.75pt]    {$d$};

\draw (498,149) node [anchor=north west][inner sep=0.75pt]    {$c$};
\draw (499,61.4) node [anchor=north west][inner sep=0.75pt]    {$b$};
\draw (484,107) node [anchor=north west][inner sep=0.75pt]    {$a$};
\draw (511,104.4) node [anchor=north west][inner sep=0.75pt]    {$d$};

\end{tikzpicture}

\caption{Penrose rhombuses with indicated shallow hole vertex, and the corresponding naming of the vertices which allows to define the diamond rule $bc-ad=1$.}
\label{fig:tiles_deco}
\end{figure}

With this rule, the function $r$ provides a valid frieze pattern of any Penrose rhombic tiling. For that, observe that the thick rhombus appears in ten different orientations. They split into two classes, which are related by a $\tfrac{2\pi}{10}$-rotation. Since $r$ cannot be zero on any vertex, we obtain only two possible decorations depicted in Figure \ref{fig:frieze_thick}.
\begin{figure}[h!]
\centering
\tikzset{every picture/.style={line width=0.75pt}} 
\begin{tikzpicture}[x=0.6pt,y=0.6pt,yscale=-1,xscale=1]
\draw  [color={rgb, 255:red, 0; green, 0; blue, 0 }  ,draw opacity=1 ][dash pattern={on 0.84pt off 2.51pt}] (252.03,153.05) -- (189.74,198.26) -- (127.49,152.98) -- (189.79,107.78) -- cycle ;
\draw  [fill={rgb, 255:red, 255; green, 255; blue, 255 }  ,fill opacity=1 ] (179.33,198.26) .. controls (179.33,192.51) and (183.99,187.85) .. (189.74,187.85) .. controls (195.48,187.85) and (200.14,192.51) .. (200.14,198.26) .. controls (200.14,204) and (195.48,208.66) .. (189.74,208.66) .. controls (183.99,208.66) and (179.33,204) .. (179.33,198.26) -- cycle ;
\draw  [fill={rgb, 255:red, 255; green, 255; blue, 255 }  ,fill opacity=1 ] (238.59,153.05) .. controls (238.59,145.63) and (244.61,139.61) .. (252.03,139.61) .. controls (259.45,139.61) and (265.47,145.63) .. (265.47,153.05) .. controls (265.47,160.47) and (259.45,166.49) .. (252.03,166.49) .. controls (244.61,166.49) and (238.59,160.47) .. (238.59,153.05) -- cycle ;
\draw  [fill={rgb, 255:red, 255; green, 255; blue, 255 }  ,fill opacity=1 ] (241.62,153.05) .. controls (241.62,147.3) and (246.28,142.64) .. (252.03,142.64) .. controls (257.78,142.64) and (262.43,147.3) .. (262.43,153.05) .. controls (262.43,158.8) and (257.78,163.46) .. (252.03,163.46) .. controls (246.28,163.46) and (241.62,158.8) .. (241.62,153.05) -- cycle ;
\draw  [fill={rgb, 255:red, 255; green, 255; blue, 255 }  ,fill opacity=1 ] (179.38,107.78) .. controls (179.38,102.03) and (184.04,97.37) .. (189.79,97.37) .. controls (195.53,97.37) and (200.19,102.03) .. (200.19,107.78) .. controls (200.19,113.52) and (195.53,118.18) .. (189.79,118.18) .. controls (184.04,118.18) and (179.38,113.52) .. (179.38,107.78) -- cycle ;
\draw  [fill={rgb, 255:red, 255; green, 255; blue, 255 }  ,fill opacity=1 ] (117.09,152.98) .. controls (117.09,147.24) and (121.75,142.58) .. (127.49,142.58) .. controls (133.24,142.58) and (137.9,147.24) .. (137.9,152.98) .. controls (137.9,158.73) and (133.24,163.39) .. (127.49,163.39) .. controls (121.75,163.39) and (117.09,158.73) .. (117.09,152.98) -- cycle ;
\draw  [color={rgb, 255:red, 0; green, 0; blue, 0 }  ,draw opacity=1 ][dash pattern={on 0.84pt off 2.51pt}] (514.03,154.05) -- (451.74,199.26) -- (389.49,153.98) -- (451.79,108.78) -- cycle ;
\draw  [fill={rgb, 255:red, 255; green, 255; blue, 255 }  ,fill opacity=1 ] (441.33,199.26) .. controls (441.33,193.51) and (445.99,188.85) .. (451.74,188.85) .. controls (457.48,188.85) and (462.14,193.51) .. (462.14,199.26) .. controls (462.14,205) and (457.48,209.66) .. (451.74,209.66) .. controls (445.99,209.66) and (441.33,205) .. (441.33,199.26) -- cycle ;
\draw  [fill={rgb, 255:red, 255; green, 255; blue, 255 }  ,fill opacity=1 ] (376.06,153.98) .. controls (376.06,146.56) and (382.07,140.55) .. (389.49,140.55) .. controls (396.92,140.55) and (402.93,146.56) .. (402.93,153.98) .. controls (402.93,161.41) and (396.92,167.42) .. (389.49,167.42) .. controls (382.07,167.42) and (376.06,161.41) .. (376.06,153.98) -- cycle ;
\draw  [fill={rgb, 255:red, 255; green, 255; blue, 255 }  ,fill opacity=1 ] (503.62,154.05) .. controls (503.62,148.3) and (508.28,143.64) .. (514.03,143.64) .. controls (519.78,143.64) and (524.43,148.3) .. (524.43,154.05) .. controls (524.43,159.8) and (519.78,164.46) .. (514.03,164.46) .. controls (508.28,164.46) and (503.62,159.8) .. (503.62,154.05) -- cycle ;
\draw  [fill={rgb, 255:red, 255; green, 255; blue, 255 }  ,fill opacity=1 ] (441.38,108.78) .. controls (441.38,103.03) and (446.04,98.37) .. (451.79,98.37) .. controls (457.53,98.37) and (462.19,103.03) .. (462.19,108.78) .. controls (462.19,114.52) and (457.53,119.18) .. (451.79,119.18) .. controls (446.04,119.18) and (441.38,114.52) .. (441.38,108.78) -- cycle ;
\draw  [fill={rgb, 255:red, 255; green, 255; blue, 255 }  ,fill opacity=1 ] (379.09,153.98) .. controls (379.09,148.24) and (383.75,143.58) .. (389.49,143.58) .. controls (395.24,143.58) and (399.9,148.24) .. (399.9,153.98) .. controls (399.9,159.73) and (395.24,164.39) .. (389.49,164.39) .. controls (383.75,164.39) and (379.09,159.73) .. (379.09,153.98) -- cycle ;

\draw (183,190) node [anchor=north west][inner sep=0.75pt]    {$2$};
\draw (183,99) node [anchor=north west][inner sep=0.75pt]    {$2$};
\draw (121,145) node [anchor=north west][inner sep=0.75pt]    {$3$};
\draw (246,145) node [anchor=north west][inner sep=0.75pt]    {$1$};

\draw (445.5,190.3) node [anchor=north west][inner sep=0.75pt]    {$3$};
\draw (445.5,99.3) node [anchor=north west][inner sep=0.75pt]    {$3$};
\draw (383,145.5) node [anchor=north west][inner sep=0.75pt]    {$4$};
\draw (508,145.5) node [anchor=north west][inner sep=0.75pt]    {$2$};
\end{tikzpicture}
\caption{The two possible vertex decorations for the thick rhombuses. Each of the tiles appears in five different orientations. The double circle indicates the shallow hole.}
\label{fig:frieze_thick}
\end{figure}

The same holds for the thin rhombuses (Figure~\ref{fig:frieze_thin}), which also admit two possible decorations.
\begin{figure}[h!]
\centering
\tikzset{every picture/.style={line width=0.75pt}}
\begin{tikzpicture}[x=0.6pt,y=0.6pt,yscale=-1,xscale=1]
\draw  [color={rgb, 255:red, 0; green, 0; blue, 0 }  ,draw opacity=1 ][dash pattern={on 0.84pt off 2.51pt}] (233.47,171.67) -- (210.67,241.71) -- (187.94,171.64) -- (210.74,101.6) -- cycle ;
\draw  [color={rgb, 255:red, 0; green, 0; blue, 0 }  ,draw opacity=1 ][dash pattern={on 0.84pt off 2.51pt}] (451.99,171.69) -- (429.19,241.73) -- (406.47,171.67) -- (429.27,101.62) -- cycle ;
\draw  [fill={rgb, 255:red, 255; green, 255; blue, 255 }  ,fill opacity=1 ] (418.86,101.62) .. controls (418.86,95.88) and (423.52,91.22) .. (429.27,91.22) .. controls (435.01,91.22) and (439.67,95.88) .. (439.67,101.62) .. controls (439.67,107.37) and (435.01,112.03) .. (429.27,112.03) .. controls (423.52,112.03) and (418.86,107.37) .. (418.86,101.62) -- cycle ;
\draw  [fill={rgb, 255:red, 255; green, 255; blue, 255 }  ,fill opacity=1 ] (393.03,171.67) .. controls (393.03,164.24) and (399.05,158.23) .. (406.47,158.23) .. controls (413.89,158.23) and (419.91,164.24) .. (419.91,171.67) .. controls (419.91,179.09) and (413.89,185.1) .. (406.47,185.1) .. controls (399.05,185.1) and (393.03,179.09) .. (393.03,171.67) -- cycle ;
\draw  [fill={rgb, 255:red, 255; green, 255; blue, 255 }  ,fill opacity=1 ] (396.06,171.67) .. controls (396.06,165.92) and (400.72,161.26) .. (406.47,161.26) .. controls (412.21,161.26) and (416.87,165.92) .. (416.87,171.67) .. controls (416.87,177.41) and (412.21,182.07) .. (406.47,182.07) .. controls (400.72,182.07) and (396.06,177.41) .. (396.06,171.67) -- cycle ;
\draw  [fill={rgb, 255:red, 255; green, 255; blue, 255 }  ,fill opacity=1 ] (418.79,241.73) .. controls (418.79,235.98) and (423.45,231.33) .. (429.19,231.33) .. controls (434.94,231.33) and (439.6,235.98) .. (439.6,241.73) .. controls (439.6,247.48) and (434.94,252.14) .. (429.19,252.14) .. controls (423.45,252.14) and (418.79,247.48) .. (418.79,241.73) -- cycle ;
\draw  [fill={rgb, 255:red, 255; green, 255; blue, 255 }  ,fill opacity=1 ] (441.59,171.69) .. controls (441.59,165.94) and (446.24,161.28) .. (451.99,161.28) .. controls (457.74,161.28) and (462.4,165.94) .. (462.4,171.69) .. controls (462.4,177.44) and (457.74,182.1) .. (451.99,182.1) .. controls (446.24,182.1) and (441.59,177.44) .. (441.59,171.69) -- cycle ;
\draw  [fill={rgb, 255:red, 255; green, 255; blue, 255 }  ,fill opacity=1 ] (200.26,241.71) .. controls (200.26,235.96) and (204.92,231.3) .. (210.67,231.3) .. controls (216.42,231.3) and (221.07,235.96) .. (221.07,241.71) .. controls (221.07,247.45) and (216.42,252.11) .. (210.67,252.11) .. controls (204.92,252.11) and (200.26,247.45) .. (200.26,241.71) -- cycle ;
\draw  [fill={rgb, 255:red, 255; green, 255; blue, 255 }  ,fill opacity=1 ] (220.03,171.67) .. controls (220.03,164.24) and (226.05,158.23) .. (233.47,158.23) .. controls (240.89,158.23) and (246.91,164.24) .. (246.91,171.67) .. controls (246.91,179.09) and (240.89,185.1) .. (233.47,185.1) .. controls (226.05,185.1) and (220.03,179.09) .. (220.03,171.67) -- cycle ;
\draw  [fill={rgb, 255:red, 255; green, 255; blue, 255 }  ,fill opacity=1 ] (177.54,171.64) .. controls (177.54,165.9) and (182.2,161.24) .. (187.94,161.24) .. controls (193.69,161.24) and (198.35,165.9) .. (198.35,171.64) .. controls (198.35,177.39) and (193.69,182.05) .. (187.94,182.05) .. controls (182.2,182.05) and (177.54,177.39) .. (177.54,171.64) -- cycle ;
\draw  [fill={rgb, 255:red, 255; green, 255; blue, 255 }  ,fill opacity=1 ] (200.34,101.6) .. controls (200.34,95.85) and (205,91.19) .. (210.74,91.19) .. controls (216.49,91.19) and (221.15,95.85) .. (221.15,101.6) .. controls (221.15,107.35) and (216.49,112.01) .. (210.74,112.01) .. controls (205,112.01) and (200.34,107.35) .. (200.34,101.6) -- cycle ;
\draw  [fill={rgb, 255:red, 255; green, 255; blue, 255 }  ,fill opacity=1 ] (223.06,171.67) .. controls (223.06,165.92) and (227.72,161.26) .. (233.47,161.26) .. controls (239.21,161.26) and (243.87,165.92) .. (243.87,171.67) .. controls (243.87,177.41) and (239.21,182.07) .. (233.47,182.07) .. controls (227.72,182.07) and (223.06,177.41) .. (223.06,171.67) -- cycle ;

\draw (423,92.4) node [anchor=north west][inner sep=0.75pt]    {$2$};
\draw (423,232.4) node [anchor=north west][inner sep=0.75pt]    {$2$};
\draw (446,162.4) node [anchor=north west][inner sep=0.75pt]    {$3$};
\draw (400.5,163.4) node [anchor=north west][inner sep=0.75pt]    {$1$};

\draw (204.5,232.4) node [anchor=north west][inner sep=0.75pt]    {$3$};
\draw (204.5,92.4) node [anchor=north west][inner sep=0.75pt]    {$3$};
\draw (227,162.4) node [anchor=north west][inner sep=0.75pt]    {$4$};
\draw (182,162.4) node [anchor=north west][inner sep=0.75pt]    {$2$};
\end{tikzpicture}
\caption{The two possible vertex decorations for the thin rhombuses. Each of the tiles appears in five different orientations. The double circle indicates the shallow hole.}
\label{fig:frieze_thin}
\end{figure}

Since the function $r$ is well defined for every vertex of the Penrose tiling and since we assign to each  
tile a pattern satisfying the diamond rule, we obtain a frieze pattern on the entire Penrose tiling. Thus 
we obtain the following theorem. 
\begin{thm} \label{thm:frieze-pen}
Every Penrose rhombic tiling admits a frieze pattern on its vertices using only four distinct values.  \qed
\end{thm}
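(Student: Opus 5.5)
The plan is to show that the index function $r$, read as an integer in $\{1,2,3,4\}$, is itself the frieze pattern, with the diamond rule oriented as in Figure~\ref{fig:tiles_deco}. Since every vertex $\mathbf{v}$ is the projection of a unique point $\mathbf{V}=\sum_i n_i\mathbf{a}_i\in A^\ast_4\setminus A_4$, the value $r(\mathbf{v})=\sum_i n_i \bmod 5$ is well defined and nonzero. We therefore take the representative in $\{1,2,3,4\}$, which gives a positive integer labelling that uses at most four values. Because the diamond rule is a condition tile by tile, it then suffices to check it on each rhombus.

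First I determine the labels on a single tile. Each edge of the tiling is the projection of some $\pm\mathbf{a}_i$, or of $\mathbf{a}_5:=-(\mathbf{a}_1+\dots+\mathbf{a}_4)$, the fifth star vector. Moving along an edge therefore changes $r$ by $\pm1$, or by $\mp4\equiv\pm1$, modulo $5$. The edge directions are oriented consistently with the lift, as in the dualisation construction. Hence a rhombus spanned by $\pi(\mathbf{e})$ and $\pi(\mathbf{f})$, with lifts $\mathbf{V},\mathbf{V}+\mathbf{e},\mathbf{V}+\mathbf{f},\mathbf{V}+\mathbf{e}+\mathbf{f}$, has labels $s,\,s+1,\,s+1,\,s+2 \pmod 5$. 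No label may be $0 \bmod 5$, so $s\in\{1,2\}$. The only possibilities are $(1,2,2,3)$ and $(2,3,3,4)$, with the two equal labels at the ends of the short or long diagonal according to the tile's geometry.

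Second I match these with the orientation given by the shallow hole. In $(1,2,2,3)$ the unique shallow hole is the vertex labelled $1$ and the opposite vertex is $3$. In $(2,3,3,4)$ it is the vertex labelled $4$, opposite the $2$. In both cases exactly one vertex is a shallow hole, which confirms the remark preceding Figure~\ref{fig:tiles_deco}. The shallow hole and its opposite vertex are the $a,d$ pair, and the two equal labels are $b,c$. The diamond rule then reads $2\cdot2-1\cdot3=1$ and $3\cdot3-2\cdot4=1$, for thick and thin tiles alike, as displayed in Figures~\ref{fig:frieze_thick} and~\ref{fig:frieze_thin}.

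The step I expect to be the main obstacle is the first one. There I must justify from the $A_4^\ast$ dualisation that adjacent vertices along a tile edge really differ by a lift whose coordinate sum is $\pm1$. This means excluding edges whose lifts have coordinate sum $\pm2$, and I must also get the sign pattern $s,s+1,s+1,s+2$ right rather than, say, $s,s+1,s-1,s$. I would settle this by citing the construction in \cite{BKSZ90,Maz23}. There the tiles are projections of 2-faces of the dual Voronoi polytopes, whose edges join points of $A_4^\ast$ in adjacent classes $k,k\pm1$ of $A_4^\ast/A_4$. This is the same fact as the de Bruijn index changing by one across a grid line \cite{dBr81}. If that is granted, the remaining argument is the finite check above, and the theorem follows for every Penrose tiling at once.
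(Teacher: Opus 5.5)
Your proposal is correct and follows the paper's argument: label each vertex by $r(\mathbf{v})\in\{1,2,3,4\}$, note that since $r$ never vanishes only the tile decorations $(1,2,2,3)$ and $(2,3,3,4)$ can occur, with the shallow hole and its opposite vertex forming the pair $a,d$, and check that $(n+1)^2-n(n+2)=1$. You also supply the justification of the pattern $s,s+1,s+1,s+2$ that the paper leaves implicit, and the obstacle you anticipate is milder than you fear: $\pi$ is injective on $A_4^\ast$ and all five vectors $\mathbf{a}_1,\dots,\mathbf{a}_5$ have class $+1$, so basing each rhombus at the corner from which both sides are positive star vectors gives the pattern directly, and $s,s+1,s-1,s$ is just that same pattern read from another corner.
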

As a simple consequence, we also obtain the following.
\begin{cor} \label{cor:inf-pen}
Every Penrose rhombic tiling admits infinitely many frieze patterns using four distinct values.
\end{cor}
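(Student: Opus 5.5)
The plan is to deform the frieze of Theorem~\ref{thm:frieze-pen} by adding a constant. I will show that for every integer $c\geqslant 0$ the labelling $f^{}_c(\mathbf{v}) = r(\mathbf{v}) + c$ is again a frieze pattern. Here $r(\mathbf{v})$ is taken, as in the theorem, with representatives in $\{1,2,3,4\}$. Distinct $c$ give distinct labellings, and each $f^{}_c$ takes exactly the four positive values $c+1,\dots,c+4$, which yields the corollary.

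First, I will extract from Figures~\ref{fig:frieze_thick} and~\ref{fig:frieze_thin} the common shape of all four local decorations, reading the roles of $a,b,c,d$ from Figure~\ref{fig:tiles_deco}. The shallow hole $d$ and the opposite vertex $a$ carry the values $\{1,3\}$ or $\{4,2\}$. The two remaining vertices $b,c$ carry equal values, namely $2$ or $3$ respectively. So in every tile the labels are of the form $(d,a,b,c)=(j,\,j\pm 2,\,j\pm 1,\,j\pm1)$ with one common sign. Equivalently, $\{a,d\}=\{m-1,m+1\}$ and $b=c=m$ for some $m\in\{2,3\}$. The diamond rule for this tile then reduces to the identity
\[
m^2-(m-1)(m+1)=1 .
\]

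Second, this identity is translation invariant. Replacing $m$ by $m+c$ keeps the configuration $\{a,d\}=\{m+c-1,m+c+1\}$, $b=c=m+c$, and the identity still gives $1$. Since $f^{}_c$ is defined vertexwise from the well-defined function $r$, adjacent tiles automatically agree on shared vertices. Hence $f^{}_c$ satisfies the diamond rule on every tile of the tiling, with the same orientation convention given by the shallow hole.

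The only point needing care is the case check in the first step. I must confirm that in all four decorations (two thick, two thin) the shallow hole and its opposite vertex really differ by $2$ and that the side vertices carry their average. This is a direct reading of the figures: $(1,3,2,2)$ and $(4,2,3,3)$ in both figures. I do not expect any genuine obstacle.

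One could also use negative $c$, or the symmetry $r\mapsto 5-r$ combined with shifts. However, positivity restricts us to $c\geqslant 0$, and that already gives infinitely many patterns.
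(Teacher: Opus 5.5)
Your proposal is correct and follows the paper's own proof. Each tile carries labels $n,n+1,n+1,n+2$ with the repeated value on $b,c$, and since $(n+1)^2-n(n+2)=1$ for all $n$, shifting every label by the same constant preserves the diamond rule. The paper adds that the shift may be any real number, whereas you restrict to integers $c\geqslant 0$ to keep the labels positive integers; that is fine, but avoid reusing the letter $c$ for both the shift and a vertex label, as in ``$b=c=m+c$''.
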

\begin{proof}
Since the constructed pattern on all rhombuses is always of the form
\begin{center}
    \begin{tabular}{ccc}
          & $n$ & \\
          $n+1$ & & $n+1$ \\
          & $n+2$ &
    \end{tabular}
\end{center}
with $n=1$ or $n=2$, and since $(n+1)^2-n(n+2)=1$ for any $n$, we can replace the numbers $\{1,2,3,4\}$ with $\{n,n+1,n+2,n+3 \}$ and still have a valid frieze pattern. In fact, we may as well choose $n$ to be an arbitrary real number and the diamond rule still holds.
\end{proof}
Figure \ref{fig:penr-frieze} shows the frieze pattern on a Penrose rhomb tiling, arising from the labelling
of the vertices of the Penrose tiling with respect to the construction above.
\begin{figure}
\[ \includegraphics[width=.8\textwidth]{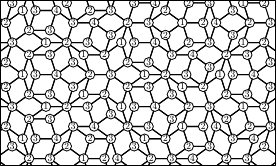} \]
\caption{ \label{fig:penr-frieze} A frieze pattern fulfilling the diamond rule superimposed on a~Penrose rhomb tiling. Image from \cite{TAO} with approval from the authors.}
\end{figure}

\section{Godr\`eche--Lan\c{c}on--Billard tiling}
Using the Penrose rhombuses, one can define another way of assembling them that results in a tiling of the plane whose properties are significantly different from those of the Penrose rhombic tiling. The tiling was first described in \cite{LB}, its substitution rule in \cite{GL}, and nowadays comes usually under the name Godr\`eche--Lan\c{c}on--Billard (GLB) tiling. It serves as an example of a tiling for which one does not have a description using the higher-dimensional structure and its projection, and whose diffraction is a singular continuous measure \cite{BGM19,Man19}, in complete contrast to Penrose rhombic tiling, which is pure-point diffractive. 
The combinatorics of the GLB tiling is also richer, and the tiling admits a more complicated atlas of vertex configurations (see \cite[Sec.~3.1.1]{Maz25}). 

This tiling can be constructed using the substitution rule in Figure \ref{fig:GLB1}.
\begin{figure}[h]
\centering
\include{glb1}
\caption{The action of the original GLB substitution rule on thick and thin rhombuses. The black points and arrows serve to mark the orientation of each tile.}
\label{fig:GLB1}
\end{figure}

One starts with a single tile and iteratively applies the rule to create larger and larger patches of tiles. These --- in the limit --- provide a tiling of the plane. We refer the reader to \cite{TAO} for further details on the construction of substitution tilings. We keep track of the black points which allow
us to define the diamond rule for both tile types as in the case of the Penrose tiling in the previous section.

The tiling is invariant under the fourth iteration of the substitution rule. Since the substitution rule is primitive \cite{TAO}, and the resulting tilings are aperiodic,  the resulting tilings have the unique decomposition property \cite{Sol}, meaning that any patch of the GLB tiling can be uniquely grouped into the supertiles (the inflated tiles according to the substitution rule at Figure \ref{fig:GLB1}). This is schematically depicted in Figure~\ref{fig:GLB2}.

\begin{figure}
    \centering
    \includegraphics[width=0.75\linewidth]{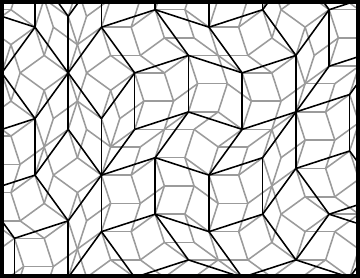}
    \caption{A patch of GLB tiling (grey) with its level-1 supertile structure (black).}
    \label{fig:GLB2}
\end{figure}

This property is the key to obtaining the frieze pattern on this tiling. Observe that this decomposition assigns to each tile one particular point (the black points in Figure \ref{fig:GLB1}). These are the vertices of the supertile structure. 
Let all black vertices (the supertile vertices) be labelled by 1. All vertices exactly one edge distant from a vertex
with label 1 will get label~2. The remaining vertices all get label 3. This labelling is indicated in Figure \ref{fig:GLB1}.
It is consistent with the diamond rule on the supertiles.

Since the GLB tiling can be grouped into supertiles, and each rhomb belongs to exactly one supertile, it follows 
immediately that the diamond rule is fulfilled throughout the entire tiling. It remains to convince oneself
that the labelling is consistent. This can be done in three simple steps: A supervertex is always labelled 1. 
Since every tile meets another one edge-to-edge in the GLB tilings, the supertiles also meet
(super-)edge-to-(super-)edge in the tiling, and supervertices are labelled 1 consistently. 
The vertex corresponding to the midpoint of a super-edge is always labelled 2 with respect to both supertiles
sharing this super-edge. The remaining vertices are labelled 3. By construction, they are always contained 
in the interior of some supertile, so no ambiguity can occur in this case.

\begin{thm}
Every GLB tiling admits a frieze pattern on its vertices using only three distinct values. \qed
\end{thm}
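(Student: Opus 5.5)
The plan is to define the labelling $\ell$ on the vertices of a fixed GLB tiling $\mathcal{T}$ through its level-1 supertile structure. Then I show two things: $\ell$ is well defined, and it satisfies the diamond rule on every tile.

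First, I use primitivity of the substitution and aperiodicity of $\mathcal{T}$ to invoke the unique decomposition property \cite{Sol}. This gives a unique tiling $\mathcal{T}'$ by inflated rhombuses (supertiles) such that every tile of $\mathcal{T}$ lies in exactly one supertile. Inside each supertile the tiles are arranged exactly as in Figure~\ref{fig:GLB1}.

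Next I define the labels supertile by supertile. Inside a single supertile $S$, vertices of $S$ (the black points) get label $1$, vertices at edge-distance one from such a point get $2$, and all others get $3$. This is the decoration drawn in Figure~\ref{fig:GLB1}. For each of the two prototiles and the finitely many tiles in their images, I check directly that every tile has labels of the form $n$ at the shallow-type (black) vertex, $n+1,n+1$ at the two side vertices, and $n+2$ at the opposite vertex. Since $(n+1)^2-n(n+2)=1$, this gives $bc-ad=1$ with the orientation convention of Figure~\ref{fig:tiles_deco}. It is a finite check on the two patches of Figure~\ref{fig:GLB1}. It also includes verifying that the black points in the substitution recover the orientation marks of the subtiles used to orient the diamond rule.

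The main obstacle is consistency: a vertex lying in several supertiles must receive the same label from each. I split into three cases by where the vertex sits in $\mathcal{T}'$.
\begin{itemize}
\item \emph{Supervertex.} GLB tiles meet edge-to-edge, so the supertiles in $\mathcal{T}'$ also meet super-edge-to-super-edge. A vertex of one supertile is therefore a vertex of every supertile containing it, and it is labelled $1$ in each.
\item \emph{Interior point of a super-edge.} The two adjacent supertiles induce the same subdivision of the shared super-edge. This should follow from the substitution: the edge pattern of an inflated edge depends only on the edge (its length and marking), not on which rhombus it bounds. So the vertex is the same subdivision point for both. I then check in Figure~\ref{fig:GLB1} that such points (the super-edge "midpoints") are at edge-distance one from a corner in both supertiles, hence labelled $2$ from both sides.
\item \emph{Interior vertex of a supertile.} It belongs to a unique supertile, so no ambiguity arises.
\end{itemize}

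The step I expect to need the most care is the super-edge case. I must make sure that the edge-decoration induced by the substitution is symmetric, or at least matched by the markings, so that the two neighbouring supertiles agree on it. I would verify this directly from the edge markings in Figure~\ref{fig:GLB1}. Once consistency holds, every tile lies in some supertile, where the diamond rule was verified, so $\ell$ is a frieze pattern with values in $\{1,2,3\}$.
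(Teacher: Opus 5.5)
Your proposal is correct and follows the paper's proof essentially step for step: unique decomposition into level-1 supertiles, labels $1,2,3$ for supertile corners, their neighbours and the remaining vertices, a finite check of the pattern $n,n+1,n+1,n+2$ on the two substituted patches of Figure~\ref{fig:GLB1}, and the same three-case consistency argument (supervertices, super-edge ``midpoints'', interior vertices). One clarification: since the inflation factor is $\sqrt{\tau+2}$ with $\tau$ the golden ratio, each super-edge is replaced by a two-edge zigzag whose apex is the ``midpoint''. That apex lies off the straight super-edge, so your three cases should be read combinatorially rather than by geometric position in $\mathcal{T}'$. Also, the two neighbouring supertiles agree on this apex simply because their substituted patches tile the plane without gaps or overlaps, so you need no separate edge-marking check.
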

Again, since $n(n+2) = (n+1)^2-1$, we obtain the following result in exactly the same manner as 
Corollary \ref{cor:inf-pen}.
\begin{cor}
Every GLB tiling admits infinitely many frieze patterns using three distinct values only. \qed
\end{cor}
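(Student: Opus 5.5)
The plan is to copy the argument of Corollary~\ref{cor:inf-pen}. I would start from the labelling behind the preceding theorem: supervertices get $1$, vertices one edge from a supervertex get $2$, and all other vertices get $3$. The first step is to check that, on every single rhomb of the GLB tiling, this labelling has the shape
\begin{center}
    \begin{tabular}{ccc}
          & $k$ & \\
          $k+1$ & & $k+1$ \\
          & $k+2$ &
    \end{tabular}
\end{center}
with $k\in\{1,2\}$ or possibly $k=1$ only. Here the top entry is the vertex $a$ at the black point, the two side vertices are $b$ and $c$, and the bottom entry is $d$, following Figure~\ref{fig:tiles_deco}. Since each tile belongs to exactly one supertile, it suffices to inspect the finitely many tiles inside the two decorated supertiles in Figure~\ref{fig:GLB1}. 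So this is a finite check against the labels already drawn there. The key fact is that the diamond rule on any tile reads $(k+1)^2-k(k+2)=1$; in other words, it really only involves labels differing by $1$ along the two edge steps of the tile.

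The second step is the shift. For an arbitrary integer $n\ge 1$, I would replace each label $\ell\in\{1,2,3\}$ by $\ell+n-1$, so that the values become $\{n,n+1,n+2\}$. Every tile then carries $k'=k+n-1$ in the same configuration. The identity $(k'+1)^2-k'(k'+2)=1$ holds for every $k'$, so the diamond rule survives on every tile.

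Consistency across tiles is inherited automatically. The new labelling is the old one composed with a fixed bijection of values, and the old labelling is well defined on vertices by the three-step argument before the theorem. The new values are positive integers. Distinct $n$ give distinct labellings, for example because they differ at any supervertex. This yields infinitely many frieze patterns, each using exactly three distinct values.

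The main obstacle is the first step: one must make sure that every rhomb really has the shape $k,k+1,k+1,k+2$, with the shallow-hole/black-point vertex at the top. A decoration such as $1,2,2,3$ could a priori appear in a different arrangement, for instance with $a=2$, which would break shift-invariance. I would check this tile by tile in Figure~\ref{fig:GLB1}. The point to confirm is that on each tile the black point has the minimal label and the opposite vertex the maximal one. This follows from the labels being a graph-distance function from the supervertices, provided each tile's black point is indeed its vertex nearest to a supervertex.
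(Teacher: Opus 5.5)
Your proposal is correct and follows the paper's argument: the paper invokes $n(n+2)=(n+1)^2-1$ and the shift trick of Corollary~\ref{cor:inf-pen}. Your tile-by-tile check of the two supertiles in Figure~\ref{fig:GLB1} makes explicit what the paper leaves implicit; every rhomb there carries $1,2,2,3$ with the black point at $1$, so only $k=1$ occurs. You need not require the black point to carry the minimum, since $bc-ad$ is symmetric in $a$ and $d$ (in Figure~\ref{fig:tiles_deco} the black point of the thick tile actually sits at $d$), though in the GLB case it does hold.
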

Figure \ref{fig:glb-fries} shows a patch of a GLB tiling with the frieze pattern constructed above.
\begin{figure}
\[ \includegraphics[width=.8\textwidth]{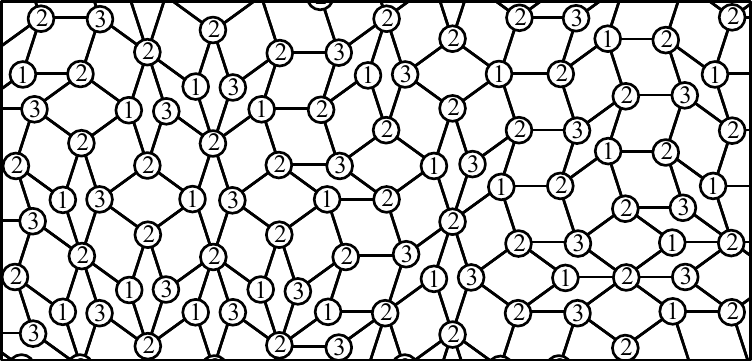} \]
\caption{ \label{fig:glb-fries} A frieze pattern fulfilling the diamond rule with labels $1,2,3$ 
superimposed on a GLB tiling.} 
\end{figure}

\section{Concluding remarks}
Here we introduced the first two examples of frieze patterns on aperiodic rhomb tilings. This
raises several questions: Are there frieze patterns on every aperiodic rhomb tiling? What about the Ammann--Beenker tiling? (We tried, without success.) Are there frieze patterns on the Penrose tiling where the number of distinct labels is not four? Similar for the GLB tiling, with the number of labels not being three. What are the requirements
for an aperiodic rhomb tiling to allow for a frieze pattern? What about other rhomb tilings,
for instance periodic ones other than the rhombic grid?

\section*{Acknowledgement}
Both authors thank Eleonora Faber for an introduction to infinite frieze patterns during a~stay at Mathematisches Forschungsinstitut Oberwolfach (MFO) in February 2026. The authors also greatly acknowledge the hospitality of the MFO, where the main ideas were discussed, and where most of the manuscript was written. JM was supported by MFO as the Leibniz Fellow (Project Nr 2603q).

\end{document}

%% file: glb1.tex
\tikzset{every picture/.style={line width=0.75pt}} 

\begin{tikzpicture}[x=0.53pt,y=0.53pt,yscale=-1,xscale=1]
\path (500,20); 

\draw  [draw opacity=0][fill={rgb, 255:red, 155; green, 155; blue, 155 }  ,fill opacity=0.5 ] (583.44,-56.12) -- (604.4,8.99) -- (583.44,74.1) -- (562.47,8.99) -- cycle ;
\draw    (583.44,74.1) -- (562.47,45.24) ;
\draw    (604.4,45.24) -- (583.44,74.1) ;
\draw    (583.44,-19.28) -- (562.47,9.58) ;
\draw    (562.47,9.58) -- (562.47,45.24) ;
\draw    (583.44,-19.28) -- (583.44,16.39) ;
\draw    (583.44,16.39) -- (562.47,45.24) ;
\draw    (583.44,16.39) -- (604.4,45.24) ;
\draw    (583.44,-19.28) -- (604.4,9.58) ;
\draw    (604.4,9.58) -- (604.4,45.24) ;
\draw  [fill={rgb, 255:red, 0; green, 0; blue, 0 }  ,fill opacity=1 ] (585.96,74.1) .. controls (585.96,75.49) and (584.83,76.62) .. (583.44,76.62) .. controls (582.04,76.62) and (580.91,75.49) .. (580.91,74.1) .. controls (580.91,72.7) and (582.04,71.57) .. (583.44,71.57) .. controls (584.83,71.57) and (585.96,72.7) .. (585.96,74.1) -- cycle ;
\draw  [fill={rgb, 255:red, 0; green, 0; blue, 0 }  ,fill opacity=1 ] (606.92,8.99) .. controls (606.92,10.38) and (605.79,11.51) .. (604.4,11.51) .. controls (603.01,11.51) and (601.88,10.38) .. (601.88,8.99) .. controls (601.88,7.6) and (603.01,6.47) .. (604.4,6.47) .. controls (605.79,6.47) and (606.92,7.6) .. (606.92,8.99) -- cycle ;
\draw  [fill={rgb, 255:red, 0; green, 0; blue, 0 }  ,fill opacity=1 ] (565,9.58) .. controls (565,10.97) and (563.87,12.1) .. (562.47,12.1) .. controls (561.08,12.1) and (559.95,10.97) .. (559.95,9.58) .. controls (559.95,8.18) and (561.08,7.05) .. (562.47,7.05) .. controls (563.87,7.05) and (565,8.18) .. (565,9.58) -- cycle ;
\draw  [draw opacity=0][fill={rgb, 255:red, 155; green, 155; blue, 155 }  ,fill opacity=0.5 ] (259.29,-29.42) -- (314.17,10.46) -- (259.29,50.33) -- (204.4,10.46) -- cycle ;
\draw    (469.41,-23.82) -- (458.38,10.1) ;
\draw    (458.38,10.1) -- (469.41,44.02) ;
\draw    (469.41,-23.82) -- (480.43,10.1) ;
\draw    (480.43,10.1) -- (469.41,44.02) ;

\draw [line width=0.75]    (95.85,-10.43) -- (67,10.54) ;
\draw [line width=0.75]    (124.71,10.54) -- (95.85,31.5) ;
\draw [line width=0.75]    (67,10.54) -- (95.85,31.5) ;
\draw [line width=0.75]    (95.85,-10.43) -- (124.71,10.54) ;

\draw    (280.25,-0.57) -- (259.29,-29.42) ;
\draw    (259.29,-29.42) -- (238.32,-0.57) ;
\draw    (238.32,-0.57) -- (204.4,10.46) ;
\draw    (259.29,28.29) -- (225.37,39.31) ;
\draw    (204.4,10.46) -- (225.37,39.31) ;
\draw    (238.32,-0.57) -- (259.29,28.29) ;
\draw    (280.25,-0.57) -- (259.29,28.29) ;
\draw    (314.17,10.46) -- (293.21,39.31) ;
\draw    (259.29,28.29) -- (293.21,39.31) ;
\draw    (280.25,-0.57) -- (314.17,10.46) ;
\draw    (293.21,39.31) -- (259.29,50.33) ;
\draw    (259.29,50.33) -- (225.37,39.31) ;
\draw  [fill={rgb, 255:red, 0; green, 0; blue, 0 }  ,fill opacity=1 ] (64.48,10.54) .. controls (64.48,9.14) and (65.61,8.01) .. (67,8.01) .. controls (68.39,8.01) and (69.52,9.14) .. (69.52,10.54) .. controls (69.52,11.93) and (68.39,13.06) .. (67,13.06) .. controls (65.61,13.06) and (64.48,11.93) .. (64.48,10.54) -- cycle ;
\draw  [fill={rgb, 255:red, 0; green, 0; blue, 0 }  ,fill opacity=1 ] (201.88,10.46) .. controls (201.88,9.06) and (203.01,7.93) .. (204.4,7.93) .. controls (205.8,7.93) and (206.93,9.06) .. (206.93,10.46) .. controls (206.93,11.85) and (205.8,12.98) .. (204.4,12.98) .. controls (203.01,12.98) and (201.88,11.85) .. (201.88,10.46) -- cycle ;
\draw  [fill={rgb, 255:red, 0; green, 0; blue, 0 }  ,fill opacity=1 ] (256.76,50.33) .. controls (256.76,48.94) and (257.89,47.81) .. (259.29,47.81) .. controls (260.68,47.81) and (261.81,48.94) .. (261.81,50.33) .. controls (261.81,51.73) and (260.68,52.86) .. (259.29,52.86) .. controls (257.89,52.86) and (256.76,51.73) .. (256.76,50.33) -- cycle ;
\draw  [fill={rgb, 255:red, 0; green, 0; blue, 0 }  ,fill opacity=1 ] (311.65,10.46) .. controls (311.65,9.06) and (312.78,7.93) .. (314.17,7.93) .. controls (315.57,7.93) and (316.7,9.06) .. (316.7,10.46) .. controls (316.7,11.85) and (315.57,12.98) .. (314.17,12.98) .. controls (312.78,12.98) and (311.65,11.85) .. (311.65,10.46) -- cycle ;
\draw  [fill={rgb, 255:red, 0; green, 0; blue, 0 }  ,fill opacity=1 ] (256.76,-29.42) .. controls (256.76,-30.81) and (257.89,-31.94) .. (259.29,-31.94) .. controls (260.68,-31.94) and (261.81,-30.81) .. (261.81,-29.42) .. controls (261.81,-28.03) and (260.68,-26.9) .. (259.29,-26.9) .. controls (257.89,-26.9) and (256.76,-28.03) .. (256.76,-29.42) -- cycle ;
\draw  [fill={rgb, 255:red, 0; green, 0; blue, 0 }  ,fill opacity=1 ] (455.86,10.1) .. controls (455.86,8.71) and (456.99,7.58) .. (458.38,7.58) .. controls (459.78,7.58) and (460.91,8.71) .. (460.91,10.1) .. controls (460.91,11.5) and (459.78,12.62) .. (458.38,12.62) .. controls (456.99,12.62) and (455.86,11.5) .. (455.86,10.1) -- cycle ;
\draw [line width=0.75]    (84.47,3.43) -- (77.43,8.5) ;
\draw [shift={(75,10.25)}, rotate = 324.22] [fill={rgb, 255:red, 0; green, 0; blue, 0 }  ][line width=0.08]  [draw opacity=0] (5.36,-2.57) -- (0,0) -- (5.36,2.57) -- (3.56,0) -- cycle    ;
\draw [line width=0.75]    (274.38,41.63) -- (266.13,44.28) ;
\draw [shift={(263.27,45.19)}, rotate = 342.22] [fill={rgb, 255:red, 0; green, 0; blue, 0 }  ][line width=0.08]  [draw opacity=0] (5.36,-2.57) -- (0,0) -- (5.36,2.57) -- (3.56,0) -- cycle    ;
\draw [line width=0.75]    (220.38,22.87) -- (215.31,15.83) ;
\draw [shift={(213.56,13.4)}, rotate = 54.22] [fill={rgb, 255:red, 0; green, 0; blue, 0 }  ][line width=0.08]  [draw opacity=0] (5.36,-2.57) -- (0,0) -- (5.36,2.57) -- (3.56,0) -- cycle    ;
\draw [line width=0.75]    (252.1,-11.73) -- (257.22,-18.73) ;
\draw [shift={(258.99,-21.15)}, rotate = 126.22] [fill={rgb, 255:red, 0; green, 0; blue, 0 }  ][line width=0.08]  [draw opacity=0] (5.36,-2.57) -- (0,0) -- (5.36,2.57) -- (3.56,0) -- cycle    ;
\draw [line width=0.75]    (294.86,9.17) -- (303.09,11.88) ;
\draw [shift={(305.94,12.81)}, rotate = 198.22] [fill={rgb, 255:red, 0; green, 0; blue, 0 }  ][line width=0.08]  [draw opacity=0] (5.36,-2.57) -- (0,0) -- (5.36,2.57) -- (3.56,0) -- cycle    ;
\draw [line width=0.75]    (467.41,-4.13) -- (464.7,4.1) ;
\draw [shift={(463.77,6.95)}, rotate = 288.22] [fill={rgb, 255:red, 0; green, 0; blue, 0 }  ][line width=0.08]  [draw opacity=0] (5.36,-2.57) -- (0,0) -- (5.36,2.57) -- (3.56,0) -- cycle    ;
\draw [line width=0.75]    (566.62,25.44) -- (566.65,16.77) ;
\draw [shift={(566.66,13.77)}, rotate = 90.22] [fill={rgb, 255:red, 0; green, 0; blue, 0 }  ][line width=0.08]  [draw opacity=0] (5.36,-2.57) -- (0,0) -- (5.36,2.57) -- (3.56,0) -- cycle    ;
\draw [line width=0.75]    (591.51,0.3) -- (596.58,7.33) ;
\draw [shift={(598.33,9.76)}, rotate = 234.22] [fill={rgb, 255:red, 0; green, 0; blue, 0 }  ][line width=0.08]  [draw opacity=0] (5.36,-2.57) -- (0,0) -- (5.36,2.57) -- (3.56,0) -- cycle    ;
\draw [line width=0.75]    (590.66,56.61) -- (585.53,63.6) ;
\draw [shift={(583.76,66.02)}, rotate = 306.22] [fill={rgb, 255:red, 0; green, 0; blue, 0 }  ][line width=0.08]  [draw opacity=0] (5.36,-2.57) -- (0,0) -- (5.36,2.57) -- (3.56,0) -- cycle    ;
\draw [line width=0.75]    (142,9.95) -- (179,10) ;
\draw [shift={(182,10)}, rotate = 180.07] [fill={rgb, 255:red, 0; green, 0; blue, 0 }  ][line width=0.08]  [draw opacity=0] (5.36,-2.57) -- (0,0) -- (5.36,2.57) -- (3.56,0) -- cycle    ;
\draw [line width=0.75]    (502,9.95) -- (539,10) ;
\draw [shift={(542,10)}, rotate = 180.07] [fill={rgb, 255:red, 0; green, 0; blue, 0 }  ][line width=0.08]  [draw opacity=0] (5.36,-2.57) -- (0,0) -- (5.36,2.57) -- (3.56,0) -- cycle    ;

\end{tikzpicture}